\newtheorem{thm}{Theorem}
\newtheorem{prop}[thm]{Proposition}
\theoremstyle{definition}
\newtheorem{defi}[thm]{Definition}
\newtheorem{lem}[thm]{Lemma}
\newtheorem{rem}[thm]{Remark}
\newtheorem{cor}[thm]{Corollary}
\newcommand{\m}{{}^{-1}}
\newcommand{\Z}{\mathbb{Z}}
\newcommand{\K}{\mathbb{K}}
\newcommand{\F}{\mathbb{F}}
\begin{document}

\author{Daniel Gon\c{c}alves}
\address{ Departamento de Matem\'atica, Universidade Federal de Santa Catarina, 88040-900\\
	Florian\'opolis, Brasil}
\email{daemig@gmail.com}

\author{Laura  Orozco}
\address{Escuela de Matematicas, Universidad Industrial de Santander, Cra. 27 Calle 9  UIS
	Edificio 45, Bucaramanga, Colombia} \email{lnorogar@correo.uis.edu.co}
\author{H\'ector  Pinedo}

\address{Escuela de Matematicas, Universidad Industrial de Santander, Cra. 27 Calle 9  UIS
	Edificio 45, Bucaramanga, Colombia} \email{hpinedot@uis.edu.co}

%\email{Patrik.Nystedt@hv.se}

\subjclass[2020]{Primary 16S88, Secondary 16W50}

\keywords{{Partial action, Leavitt path algebra, graded clean, graded unit regular,grading, free group.}}

\date{\today}
\title[The graded structure of LPA viewed as partial skew group rings]{The graded structure of Leavitt path algebras viewed as partial skew group rings}

\maketitle

\begin{abstract} Let $E$ be a directed graph, $\K$ be a field, and $\F$ be the free group on the edges of $E$. In this work, we use the isomorphism between Leavitt path algebras and partial skew group rings to endow $L_\K(E)$ with an $\F$-grading and study some algebraic properties of this grading. More precisely, we show that graded cleanness, graded unit regularity, and strong grading of $L_\K(E)$ are all equivalent. 

\end{abstract}

\section{Introduction} The Leavitt path algebra associated with a directed graph $E$ and a field $\K$ is a ring  $L_\K(E)$ which is an algebraic analog of the graph Cuntz–Krieger
C*-algebra. Leavitt path algebras were introduced in 
 \cite{AMP} and  simultaneously in \cite{AAP}. Since their introduction, Leavitt path algebras
have gained importance, and several of their algebraic properties have been studied in terms of 
properties of the underlying graph (see for instance \cite{A, AAS, AALP, AP}). Each Leavitt path algebra may be viewed as a $\Z$-graded ring (see Section~4 in \cite{NO}) and a systematic study of 
properties of the $\Z$-grading is considered in \cite{H, strong, NO, NO2,  vavs2021}.  Moreover, in \cite{PMS} the authors study  Leavitt path algebras endowed with canonical $G$-gradings that contain strongly  $G$-graded rings as direct summands, where $G$ is an arbitrary group.

On the other hand, partial actions of groups and partial skew group rings are introduced and studied in \cite{DE} as algebraic analogs of $C^*$-partial crossed products and have become a useful tool in several branches of mathematics, \cite{D}. The connections between partial actions and Leavitt path algebras, in the context of actions of groups, groupoids, or inverse semigroups, are made in \cite{GR}, \cite{Yoneda}, and \cite{BeuterCordeiro}, respectively. We will focus on the connection between partial actions of groups and Leavitt path algebras, as described in \cite{GR}, where the Leavitt path algebra associated with a graph is realized as a partial skew group ring. From this realization, a new grading is obtained on Leavitt path algebras, namely a grading over the free group $\F$ on the edges of the graph. Moreover,
new proofs of the Cuntz–Krieger uniqueness theorem and the simplicity criteria
for Leavitt Path Algebras are obtained via partial crossed product theory as well as a characterization of Artinian Leavitt Path Algebras, \cite{NYOP}. 

The interaction between Leavitt path algebras and partial skew group rings is extended to include Leavitt path algebras associated with ultragraphs (which are generalizations of graphs) in \cite{GR1}. In this context, which includes Leavitt path algebras of graphs, 
the skew group ring point of view is used to prove the reduction theorem, \cite{GR2}, and to describe graphs for which the grading (both by $\mathbb Z$ and $\F$) is strong or epsilon-strong, see \cite{GR3}. Nevertheless, the development of the properties of the  $\F$-grading on a Leavitt path algebra is still in its infancy.

In this work, we study some structural properties of the $\F$-grading on a Leavitt path algebra $L_\K(E)$, such as graded cleanness and graded unit-regularity (see Proposition~\ref{loop1} and Proposition~\ref{loop2}, respectively) and show that these conditions are equivalent to $L_\K(E)$ being strongly graded, see Theorem~\ref{loop3}. In the last section of this work, given any graph, we modify the construction in \cite{GR} (including some formal series) and present an alternative partial skew group ring that is isomorphic to the Leavitt path algebra associated with the graph.

\section{Preliminaries} In this section, we present some notions and results on graded rings and partial actions which will be useful in the work. 
\subsection{Graded rings}
Let $R$ be a ring, which we always consider associative, and $G$ be a group with identity $e$. We say that $R$ is {\it graded} by $G$, or $G$-{\it graded}, or  {\it graded} when the group $G$ is clear from the context, if $R = \bigoplus_{g \in G} R_g$, where $R_g$ is an additive subgroup of $R$ for each $g \in G$, and
$R_g R_h \subseteq R_{gh}$ for any $g,h\in G$ (where for $X,Y$ subsets of $R$ the product $XY$ is defined as the set of finite sums of elements $xy,$ with $x\in X$ and $y\in Y$). If the equality $R_g R_h=R_{gh}$ is true then we say that $R$ is {\it strongly graded}.  It is not difficult to see that for a graded ring $R,$ the set  $R_e$ is a subring of $R$ and $R_gR_{g^{-1}}$ is an ideal of $R_e,$ for any $g\in G.$ Moreover, the non-zero elements in $ \bigcup_{g\in G} R_g$ are called homogeneous.

 An element $x$ in a unital ring $R$ is \textit{clean} if $x = u + a$ for some unit $u$ and some idempotent $a$. If every element of $R$ is clean then we say that $R$ is clean. In the case of graded rings, we have, from \cite{vavs2021}, the following definition.

\begin{defi} 
 A graded unital ring $R$ is \textit{graded clean} if every homogeneous element $x \in R$ has a graded clean decomposition, that is, every $x\in R$ can be written as a sum of a homogeneous unit and a homogeneous idempotent. 
\end{defi}

The following result, proved in \cite[Lemma 2.2]{vavs2021}, gives us a characterization of graded clean rings.  We include the proof here for completeness. 
\begin{lem}\label{cleang}
Let $R$ be a $G$-graded unital ring. Then $R$ is graded clean if, and only if, $R_{e}$ is clean and each nonzero element of $R_{g}$ is invertible for every $g\neq e$.
\end{lem}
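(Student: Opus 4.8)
The plan is to first record two standard facts about $G$-graded rings, which will carry most of the weight, and then to verify each implication by a short case analysis on the degree of a homogeneous element.

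First I would note the following two observations. (1) The identity $1$ lies in $R_{e}$, and every nonzero homogeneous idempotent lies in $R_{e}$: if $a\in R_{g}$ with $a^{2}=a\neq 0$, then $a\in R_{g}\cap R_{g^{2}}$, and since the sum $\bigoplus_{h\in G}R_{h}$ is direct this forces $g=g^{2}$, i.e. $g=e$. (2) A homogeneous unit has a homogeneous inverse in the inverse degree: if $u\in R_{h}$ is invertible with inverse $v=\sum_{k}v_{k}$, then comparing the homogeneous components of $uv=vu=1\in R_{e}$ shows that $v_{h^{-1}}$ is already a two-sided inverse of $u$, whence by uniqueness $v=v_{h^{-1}}\in R_{h^{-1}}$. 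In particular, a unit of $R$ that happens to lie in $R_{e}$ is a unit of the subring $R_{e}$.

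For the backward implication, I would assume $R_{e}$ is clean and every nonzero element of $R_{g}$ is invertible for $g\neq e$, and take a homogeneous $x\in R_{g}$. If $g=e$, cleanness of $R_{e}$ gives $x=u+a$ with $u$ a unit and $a$ an idempotent of $R_{e}$; by fact (2) with $h=e$ the element $u$ remains a unit of $R$, and both $u$ and $a$ are homogeneous, so this is a graded clean decomposition. If $g\neq e$, then $x$ is itself a homogeneous unit by hypothesis, and $x=x+0$ serves as a graded clean decomposition with the trivial idempotent $0$. For the forward implication, I would assume $R$ is graded clean. To see that a nonzero $x\in R_{g}$ with $g\neq e$ is invertible, write $x=u+a$ with $u$ a homogeneous unit and $a$ a homogeneous idempotent; then $u=x-a$, where $x$ is a nonzero element of $R_{g}$ and, by fact (1), $a$ is either $0$ or a nonzero element of $R_{e}$. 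If $a\neq 0$, then $u$ would have nonzero components in the distinct degrees $g$ and $e$, contradicting homogeneity of $u$; hence $a=0$ and $x=u$ is invertible. To see $R_{e}$ is clean, I would take $x\in R_{e}$ (the case $x=0$ being handled by $0=(-1)+1$) and use graded cleanness to write $x=u+a$ with $u$ homogeneous unit and $a$ homogeneous idempotent; since $a\in R_{e}$ by fact (1), also $u=x-a\in R_{e}$, and by fact (2) $u$ is a unit of $R_{e}$, so $x=u+a$ is a clean decomposition inside $R_{e}$.

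I expect the only delicate point to be the forward direction: one must combine homogeneity of $u$ with fact (1) to rule out a nontrivial idempotent in the case $g\neq e$, and one must invoke fact (2) to promote a unit of $R$ sitting in $R_{e}$ to a unit of $R_{e}$. Everything else is routine bookkeeping on homogeneous components, together with the convention that $0$ counts as a (trivial) homogeneous idempotent.
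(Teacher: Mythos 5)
Your proof is correct and follows essentially the same route as the paper's: both directions hinge on the observation that a nonzero homogeneous idempotent must lie in $R_{e}$, with the case $g\neq e$ handled by forcing the idempotent to vanish and the case $g=e$ reduced to cleanness of $R_{e}$. You are in fact slightly more careful than the paper, which leaves implicit your fact (2) (that a homogeneous unit has homogeneous inverse in the inverse degree, so a unit of $R$ lying in $R_{e}$ is a unit of the subring $R_{e}$ and conversely) as well as the trivial decomposition of $0\in R_{e}$; these are genuine, if routine, details needed for full rigor.
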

\begin{proof}
Suppose that $R$ is graded clean.  We prove that $R_{e}$ is clean: let $x\in R_{e}$. Since $R$ is graded clean, there is a homogeneous invertible element $u\in R$ and a homogeneous idempotent $a$ such that $x=u+a$. So, we only need to prove that $u,a\in R_{e}$, which is true since  $R_{e}$ is a direct summand of $R$. Now, we prove that any element of $R_{g}$ is invertible for all $g\neq e$. Let $x\in R_{g}$. Since $R$ is graded clean, we have that $x=u+a$ for a homogeneous invertible element $u$ and a homogeneous idempotent $a$. Since $a$ is a homogeneous idempotent, we have that $a\in R_{e}$, in which case $a=0$ and $x=u $ is invertible.

Conversely, suppose that $R_{e}$ is clean and each element of $R_{g}$ is invertible for all $g\neq e$. Let $x\in R_{g}$. If $g\neq e$ then $x$ is invertible and $x=x+0$. If $g=e$ then, since $R_{e}$ is clean, we can write $x=u+a$ with $u$ invertible, $a$ idempotent and $u,a\in R_{e}$. We conclude that $R$ is graded clean.
\end{proof}

A unital ring $R$ is \textit{unit-regular} if, for any $x \in R$, there is an invertible $u \in R$ such that $x = xux$. The notion of unital regularity in the realm of graded rings is defined as follows (see \cite{vavs2021}).

\begin{defi} We say that a unital graded ring $R$ is \textit{graded unit-regular} if each homogeneous element $x\in R$ can be written as $x=xux$, where $u$ is an invertible and homogeneous element.

\end{defi}
\begin{lem}
 Let $R$ be a graded unit-regular ring. Then, every nonzero component $R_{g}$ contains an invertible element.
\end{lem}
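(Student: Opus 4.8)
The plan is to start from an arbitrary nonzero homogeneous element in $R_g$ and manufacture from it a homogeneous idempotent whose degree is then forced to be $e$; the invertible element we are after will essentially be the unit supplied by graded unit-regularity. So fix $g\in G$ with $R_g\neq 0$, pick a nonzero $x\in R_g$, and use graded unit-regularity to obtain a homogeneous invertible element $u$, say $u\in R_h$ for some $h\in G$, satisfying $x=xux$.

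Before the main step I would record two elementary facts about the grading. The first is that a nonzero homogeneous idempotent must lie in $R_e$: if $a\in R_k$ satisfies $a=a^2$, then $a\in R_{k^2}$ as well, so $a\neq 0$ forces $k=k^2$, that is $k=e$. The second is that the inverse of a homogeneous unit is again homogeneous of the inverse degree: writing $u^{-1}=\sum_k v_k$ with $v_k\in R_k$ and comparing homogeneous components in $uu^{-1}=1\in R_e$ gives $uv_k=0$ for every $k\neq h^{-1}$, whence $v_k=0$ after multiplying by $u^{-1}$; thus $u^{-1}\in R_{h^{-1}}$.

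With these in hand the argument is short. Consider $xu\in R_{gh}$. It is idempotent, since $(xu)(xu)=(xux)u=xu$, and it is nonzero, because $xu=0$ together with $x=(xu)x$ would force $x=0$. By the first fact $xu\in R_e$, so $gh=e$ and hence $h=g^{-1}$. Therefore $u$ is an invertible element of $R_{g^{-1}}$, and by the second fact its inverse $u^{-1}$ is an invertible element lying in $R_g$, which is exactly what we want. One could equally well work with the idempotent $ux\in R_{hg}$ to reach the same conclusion $hg=e$.

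The individual computations are routine, so I do not expect a genuine obstacle; the one point that requires care is the bookkeeping with degrees, and in particular the auxiliary claim that homogeneous units have homogeneous inverses. This is the only place where the invertibility of $u$ (rather than merely its homogeneity) is actually used, and it is what lets us pass from $u\in R_{g^{-1}}$ to an invertible element sitting in $R_g$.
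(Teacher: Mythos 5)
Your proof is correct and follows essentially the same route as the paper's: from $x=xux$ you deduce that the homogeneous unit $u$ lies in $R_{g^{-1}}$ and hence $u^{-1}$ is an invertible element of $R_g$, which is exactly the paper's (one-line) argument. Your version simply fills in the degree bookkeeping the paper leaves implicit — that a nonzero homogeneous idempotent such as $xu$ must lie in $R_e$, and that the inverse of a homogeneous unit is homogeneous of the inverse degree — both of which are verified correctly.
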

\begin{proof}
Let $0\neq x\in R_{g}$. Then, there exists a homogeneous and invertible element $u$  such that $x=xux$, which implies that $u\in R_{g^{-1}}$ and its inverse $u^{-1}$ belongs to $ R_{g}$.
\end{proof}

\subsection{Partial actions} A {\it  partial action} of the group $G$ on a ring $R$ is a pair
$\alpha = ( D_g  , \alpha_g )_{g \in G}$,
where for each $g \in G$, $D_g$ is an ideal of $R,$
$\alpha_g : D_{g^{-1}} \rightarrow D_g$ is an isomorphism
of rings, and for each $(g,h) \in G \times G$,
%having an (not necessarily non-zero)  identity element $1_g$ which is central in $R$,

%{\color{red}: Maybe we want to define partial action for rings? If so, then $A$ should be a ring (here and below)}
\begin{itemize}

\item[(P1)] $\alpha_e = {\rm id}_R$;

\item[(P2)] $\alpha_g(D_{g^{-1}}\cap D_h) = D_g\cap D_{gh}$;

\item[(P3)] if $r \in D_{h^{-1}}\cap D_{(gh)^{-1}}$, 
then $\alpha_g ( \alpha_h (r) ) =
 \alpha_{gh}(r).$
\end{itemize}
Moreover, we say that $\alpha$ is {\it global} if $D_g=R,$ for any $g\in G.$
\vspace{0.2cm}

Associated with the notion of partial action, we have the generalization of the concept of skew group ring.

\begin{defi}\label{skewr}
	Given a partial action $\alpha=( D_g  , \alpha_g )_{g \in G}$ on a ring $R,$ the {\it  partial skew group ring} $R \star_{\alpha} G$
	is the direct sum $\bigoplus_{g \in G} D_g \delta_g$,
	in which the $\delta_g$'s are formal symbols. Addition is defined point-wise and multiplication is defined by the bi-additive 
	extension of the relations
\begin{equation}\label{prodc}(r \delta_g) (r' \delta_h) =\alpha_g(\alpha_{g^{-1}}(r) r' )  \delta_{gh},\end{equation}
for $g,h \in G$, $r \in D_g$ and $r' \in D_h$. 	
\end{defi}

 In general, the ring $R \star_{\alpha} G$ %{\color{red} We were using $A \star_{\alpha} G$...Now, what is $R$?}
 is not associative, as shown in \cite[Example 3.5]{DE}. Nevertheless, it follows by \cite[Corollary 3.2]{DE}  that $R \star_{\alpha} G$ is associative provided that  $D_g$ is idempotent or non degenerate, for any $g\in G.$  Moreover, by equation \eqref{prodc} follows that $R \star_{\alpha} G$ is a $G$-graded ring, where $(R \star_{\alpha} G)_g=D_g\delta_g,$ for all $g\in G.$

%\noindent We will begin by observing that the partial action $D(X)\rtimes_{\alpha}\mathbb{F}$ is a global action as long as $D_{0}=D_{p}$ for all $p\in \mathbb{F}$, with this in mind we have the following result.

The following is straightforward. % proposition is well known, see \cite{???}. We include  OR NOT?? its proof for completeness. }

\begin{prop} \label{globs} Let $\alpha=( D_g  , \alpha_g )_{g \in G}$ be a partial action such that $D_g$ is idempotent for any $g\in G$. Then 
  $R\star_{\alpha} G$ 
  is strongly $G-$graded if, and only if, $\alpha$ is global.  
\end{prop}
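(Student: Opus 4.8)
The plan is to prove both implications by directly computing the products of homogeneous components $R_g = D_g\delta_g$ of $R\star_\alpha G$, with the idempotency hypothesis serving as the tool that lets these products fill entire ideals.

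For the implication that globality forces strong grading, I would assume $D_g = R$ for every $g$, so that each $\alpha_g$ is an automorphism of $R$. Fixing $g,h \in G$, I would expand a general product using \eqref{prodc}: the set $R_g R_h$ is the additive span of the elements $\alpha_g(\alpha_{g^{-1}}(r)r')\delta_{gh}$ with $r,r' \in R$. Since $\alpha_{g^{-1}}$ maps $R$ onto $R$, the inner elements $\alpha_{g^{-1}}(r)r'$ range over a set whose additive span is $R^2$, and idempotency gives $R^2 = R$; applying the ring isomorphism $\alpha_g$ (which is additive, multiplicative, and surjective onto $R$) then yields $R_g R_h = \alpha_g(R)\delta_{gh} = R\delta_{gh} = R_{gh}$. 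Hence $R\star_\alpha G$ is strongly graded.

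For the converse, I would assume $R\star_\alpha G$ is strongly graded and exploit only the special case $h = g^{-1}$. Here $R_e = D_e\delta_e = R\delta_e$, using $\alpha_e = \mathrm{id}_R$ to identify $D_e = R$, so strong grading gives $R_g R_{g^{-1}} = R_e = R\delta_e$. On the other hand, the same computation as above, now carried out inside $D_{g^{-1}}$, shows that $R_g R_{g^{-1}}$ is the additive span of the elements $\alpha_g(\alpha_{g^{-1}}(r)r')\delta_e$ with $r\in D_g$ and $r'\in D_{g^{-1}}$; since $\alpha_{g^{-1}}(r)$ ranges over all of $D_{g^{-1}}$, idempotency of $D_{g^{-1}}$ collapses this span to $\alpha_g(D_{g^{-1}})\delta_e$, which equals $D_g\delta_e$ because $\alpha_g$ maps $D_{g^{-1}}$ onto $D_g$. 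Comparing the two expressions forces $D_g = R$, and since $g$ was arbitrary, $\alpha$ is global.

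The computations themselves are routine; the only point requiring care, and the place where the idempotency hypothesis is indispensable, is the passage from the additive span of products $\{ar' : a,r' \in D_{g^{-1}}\}$ back to the whole ideal $D_{g^{-1}}$. Without idempotency one would obtain only $R_g R_{g^{-1}} = \alpha_g(D_{g^{-1}}^2)\delta_e \subseteq D_g\delta_e$, possibly with proper inclusion, and the argument in both directions would break down. I expect no genuine obstacle beyond tracking these inclusions carefully and recording that $\alpha_g$, being a bijective ring homomorphism, transports idempotency of $D_{g^{-1}}$ to that of $D_g$.
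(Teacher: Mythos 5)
Your proof is correct, and it is exactly the routine computation the paper has in mind when it calls Proposition~\ref{globs} straightforward and omits the argument: both directions reduce to the identity $(D_g\delta_g)(D_h\delta_h)=\alpha_g(D_{g^{-1}}D_h)\delta_{gh}$ combined with $D_{g^{-1}}D_{g^{-1}}=D_{g^{-1}}$. One small correction to your closing remark, though: in the converse direction idempotency is actually dispensable, since strong grading already gives $R\delta_e=R_gR_{g^{-1}}=\alpha_g(D_{g^{-1}}D_{g^{-1}})\delta_e\subseteq \alpha_g(D_{g^{-1}})\delta_e=D_g\delta_e$, forcing $D_g=R$ without any appeal to $D_{g^{-1}}$ being idempotent; the hypothesis is genuinely needed only for the implication from global to strongly graded.
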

%\begin{proof}

%Suppose that $R \star_{\alpha} G$ is strongly graded. Then, for each $g\in G$
%\begin{align*}    D_{e}\delta_{e}=(D_{g}\delta_{g})\cdot (D_{g^{-1}}\delta_{g^{-1}})=\alpha_g(\alpha_{g^{-1}}(D_{g})D_{g^{-1}})\delta_{e}
   % = \alpha_{g}(D_{g^{-1}}D_{g^{-1}})\delta_{e}
   % =D_{g}\delta_{e}
%\end{align*}
%and hence $D_{g}=D_{e}=R$ for all $g\in G$  and $\alpha$ is global. 

%Reciprocally, if $\alpha$ is global, we have that $D_g=R$ for every $g\in G$ and hence, for $g,h\in G$, we have
%\begin{align*}   (D_{g}\delta_{g})\cdot (D_{h}\delta_{h})&= \alpha_{g}(\alpha_{g^{-1}}(D_{g})D_{h})\delta_{gh}\\
  %  &=\alpha_{g}(R^2)\delta_{gh}=\alpha_{g}(R)\delta_{gh}\\
%&=D_{gh}\delta_{gh}
%\end{align*} and $R \star_{\alpha} G$ is strongly graded.
%\end{proof}

\section{Leavitt path algebras as partial crossed products}\label{LPASR}

A directed graph $E=(E^0,E^1,r,s)$ consists of two countable sets $E^0$, $E^1$ and maps $r,s : E^1 \to E^0$ (called the range and source maps, respectively). The elements of $E^0$ are called \emph{vertices} and the elements of $E^1$ are called \emph{edges}. %If both $E^0$ and $E^1$ are finite sets, then we say that $E$ is \emph{finite}.
A vertex $v$ such that $s^{-1}(v)=\emptyset$ is called a \emph{sink}, such that $r^{-1}(v)=\emptyset$ is called a \emph{source} and such that $s^{-1}(v)=\emptyset=r^{-1}(v)$ is called \emph{isolated}. A vertex $v\in E^0$ such that $|s^{-1}(v)|=\infty$ is called an \emph{infinite emitter}.
A \emph{path} $\mu$ in 
$E$ is either a vertex or a sequence of edges $\mu = \mu_1 \ldots \mu_n$ such that $r(\mu_i)=s(\mu_{i+1})$ for $i\in \{1,\ldots,n-1\}$. For a path $\mu = \mu_1 \ldots \mu_n$, the range and source maps are extended as $s(\mu):=s(\mu_1)$ and $r(\mu)=r(\mu_n)$, respectively, and $n$ is the \emph{length} of $\mu$. We set the length of a vertex as zero.  A \emph{loop based on a vertex $v$} is an edge $e$ such that $s(e)=r(e)=v$. When the context is clear we will omit the base of the loop. A special role in this  work  is played by the graph $$
			\xymatrix{ &\bullet^{u}\ar@(dl,dr)_{e} } 
$$
    which consists of one vertex and one loop (abusing the notation, we also say that such a graph is a loop).%Recall that a path $\mu$ is called a \emph{cycle} if $s(\mu)=r(\mu)$ and $s(\mu_i)\neq s(\mu_j)$ for every $i\neq j$.
%A graph $E$ without cycles is said to be \emph{acyclic}.

Next, we recall the definition of the Leavitt path algebra associated with a graph (see \cite{AAP} for more details).

\begin{defi}\label{lpa}
Let $E$ be a directed graph and let $\K$ be a field.
The \emph{Leavitt path {$\K$}-algebra $L_{\K}(E)$ of $E$ with coefficients in $\K$} is the
$\K$-algebra generated by a set $\{v \mid v\in E^0\}$ of pairwise orthogonal idempotents, together with a set of elements $\{f \mid f\in E^1\} \cup
\{f^* \mid f\in E^1\}$, which satisfy the following  relations:
\begin{enumerate}
\item $s(f)f=fr(f)=f$, for all $f\in E^1$;
\item $r(f)f^*=f^*s(f)=f^*$, for all $f\in E^1$;
\item $f^*f'=\delta_{f,f'}r(f)$, for all $f,f'\in E^1$;
\item $v=\sum_{ \{ f\in E^1 \mid s(f)=v \} } ff^*$, for every $v\in E^0$ for which $s^{-1}(v)$ is non-empty and finite.
\end{enumerate}
\end{defi}

In \cite{GR}, it is shown that the Leavitt path algebra $L_{\K}(E)$ is isomorphic as a $\K$-algebra to a partial skew group ring $D(X) \star_\alpha \F$, where $D(X)$ is a  certain commutative $\K$-algebra and $\F$ is the free group generated by $E^1$.
For the reader's benefit, we briefly recall the construction of $D(X) \star_\alpha \F$.

Let $W$ denote the set of all finite paths of nonzero length in $E$, and let $W^{\infty}$ denote the set of all infinite paths in $E$.
Denote by $\F$ the free group generated by $E^1$.
We define a partial action of $\F$ on the set
\begin{displaymath}
	X=\{\xi\in W \mid r(\xi) \text{ is a sink}\}\cup \{v\in E^0 \mid v\text{ is a sink }\}\cup W^{\infty}.
\end{displaymath}
For each $g\in \F$, let $X_g$ be defined as follows:
\begin{itemize}
\item $X_e:=X$, where $e$ is the identity element of $\F$. 

\item $X_{b^{-1}}:=\{\xi\in X \mid s(\xi)=r(b)\}$, for all $b\in W$. 

\item $X_a:=\{\xi\in X \mid \xi_1\xi_2...\xi_{|a|}=a\}$, for all $a\in W$.

\item $X_{ab^{-1}}:=\{\xi\in X \mid \xi_1\xi_2...\xi_{|a|}=a\}=X_a$, for $ab^{-1}\in \F$ with $a,b\in W$, $r(a)=r(b)$ and $ab^{-1}$ in its reduced form.

\item $X_g:=\emptyset$, for all other $g \in \F$.

\end{itemize}

Let  $\theta_e:X_e\rightarrow X_e$ be the identity map. For $b\in W$, if $r(b)$ is not a sink, $\theta_b:X_{b^{-1}}\rightarrow X_b$ is defined by $\theta_b(\xi)=b\xi$ and $\theta_{b^{-1}}:X_b\rightarrow X_{b^{-1}}$ by
$\theta_{b^{-1}}(\eta)= \eta_{|b|+1}\eta_{|b|+2}\ldots$.
On the other hand, if $r(b)$ is a sink, then $X_{b^{-1}}=\{r(b)\}$ and $X_b=\{b\}$, in which case we define $\theta_b(r(b)) = b$ and $\theta_{b^{-1}}(b)=r(b)$.
Finally, for $a,b\in W$ with $r(a)=r(b)$ and $ab^{-1}$ in reduced form, $\theta_{ab^{-1}}:X_{ba^{-1}}\rightarrow X_{ab^{-1}}$ is defined by $\theta_{ab^{-1}}(\xi)=a\xi_{(|b|+1)}\xi_{(|b|+2)}\ldots$, with inverse 
$\theta_{ba^{-1}}:X_{ab^{-1}}\rightarrow X_{ba^{-1}}$ defined by $\theta_{ba^{-1}} (\eta)=b\eta_{(|a|+1)}\eta_{(|a|+2)}\ldots$ .

The pair $(X_g, \theta_g)_{g\in\F}$ is a partial action on the set level, which gives a partial action on a ring level  $(F(X_g), \alpha_g)_{g\in\F}$, where, for each $g\in \F$, $F(X_g)$ denotes the $\K$-algebra of all functions from $X_g$ to $\K$ (which is identified with the subalgebra of $F(X)$ given by the functions from $X$ to $\K$ that vanish outside $X_g$), and $\alpha_g:F(X_{g^{-1}})\rightarrow F(X_g)$ is defined by the rule $\alpha_g(f)=f\circ \theta_{g^{-1}}$.
We define another partial action in the following way:

For each $g\in \F$, and for each $v\in E^0$, define the characteristic maps $1_g:=\chi_{X_g}$ and $1_v:=\chi_{X_v}$, where $X_v=\{\xi\in X \mid s(\xi)=v\}$. Notice that $1_g$ is the identity element of $F(X_g)$ and  $1_a=1_{ab\m},$  for $ab^{-1}\in \F$ with $a,b\in W$, $r(a)=r(b)$ and $ab^{-1}$ in its reduced form. Finally, let
\begin{equation}\label{eqdx}
    D(X)= D_{e}=span_{\K}\{\{1_{g}\mid g\in \F\setminus\{0\}\}\cup\{1_{v}\mid v\in E^{0}\}\}\subseteq F(X),
\end{equation}
where $ span_{\K}$ means the $\K$-linear span, and for each $g\in \F\setminus\{e\}$, let $D_g\subseteq F(X_g)$ be defined as $1_g D_e$, that is,
\begin{displaymath}
	D_g= span_{\K}\{1_g1_h \mid h\in \F\}.
\end{displaymath}
By \cite[Lemma 2.4]{GR}, $D(X)$ is a $\K$-algebra and $D_g$, for $g\in \F$, is an ideal of $D(X)$.
From  \cite[Lemma 2.6]{GR} one has that \begin{equation}\label{alpha}
    \alpha_g(1_{g^{-1}}1_h)=1_g1_{gh}, \end{equation} for each $g\in \F$, this implies that 
one obtains that  $\alpha_g$ is a bijection from $D_{g^{-1}}$ onto $D_g$.
To simplify notation, this restriction map will also be denoted by $\alpha_g$.
Clearly, $\alpha_{g}:D_{g^{-1}}\rightarrow D_g$ is an isomorphism of {$\K$}-algebras and, furthermore, $(\alpha_g, D_g)_{g\in \F}$ is a partial action. By \cite[Proposition 3.2]{GR} the map 
\begin{equation}\label{isop}\varphi : L_\K(E) \to D(X) \star_\alpha \F\,\,, \varphi(f)=1_f \delta_f, \,\, \varphi(f^*)=1_{f^{-1}}\delta_{f^{-1}}\,\,\,\text{and}\,\,\, \varphi(v)=1_v \delta_e,\end{equation}  for any $f\in E^1$ and $v\in E^0$, is an isomorphism of {$\K$}-algebras.\medskip

Our goal is to show some results of the study of the $\mathbb{F}$-grading on Leavitt path algebras induced by the isomorphism defined above.

\begin{prop}\label{idemp}
The ideal $D_g$ defined above is idempotent for any $g\in\F$.
\end{prop}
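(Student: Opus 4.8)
The plan is to exploit the fact that every algebra generator of $D_g$ is a characteristic function, and hence an idempotent element of the commutative algebra $F(X)$. Recall that $D_g^2$ is by definition the set of finite sums of products $xy$ with $x,y\in D_g$, and that showing $D_g$ is idempotent amounts to establishing $D_g^2=D_g$. One inclusion is immediate: since $D_g$ is an ideal of the ring $D(X)$, it is closed under multiplication, so $D_g^2\subseteq D_g$. The content of the statement therefore lies in the reverse inclusion $D_g\subseteq D_g^2$.

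For this I would first record that each spanning element is idempotent. When $g=e$ the algebra $D_e=D(X)$ is spanned by the functions $1_h=\chi_{X_h}$ and $1_v=\chi_{X_v}$; when $g\neq e$ the ideal $D_g$ is spanned by the functions $1_g1_h=\chi_{X_g\cap X_h}$. In either case each such generator $p$ is a characteristic function on a subset of $X$, so $p^2=p$ under the pointwise multiplication of $F(X)$.

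Now take an arbitrary $x\in D_g$ and write it as a finite $\K$-linear combination $x=\sum_i\lambda_i p_i$ of these idempotent generators $p_i\in D_g$. Using $p_i=p_i\cdot p_i$, I would rewrite $x=\sum_i(\lambda_i p_i)\cdot p_i$. Each summand $(\lambda_i p_i)\cdot p_i$ is a product of two elements of $D_g$, hence lies in $D_g^2$, and since $D_g^2$ is closed under finite sums we conclude $x\in D_g^2$. This gives $D_g\subseteq D_g^2$, and combined with the reverse inclusion yields $D_g=D_g^2$ for every $g\in\F$.

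The argument is essentially formal, so I do not anticipate a genuine obstacle; the only point requiring care is that $D_g$ is typically non-unital, which rules out the trivial appeal to an identity element. The decomposition of each element of $D_g$ as a $\K$-linear combination of idempotent characteristic functions is precisely what replaces the missing unit, and the commutativity of $D(X)$ guarantees that these generators multiply in the expected way.
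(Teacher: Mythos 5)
Your argument is correct, and it takes a genuinely different (and more uniform) route than the paper's. You observe that every spanning element of $D_g$ ($1_h$ and $1_v$ for $g=e$; $1_g1_h$ for $g\neq e$) is a characteristic function, hence an idempotent of the commutative algebra $F(X)$, so any $x=\sum_i\lambda_i p_i\in D_g$ rewrites as $x=\sum_i(\lambda_i p_i)p_i\in D_g^2$; combined with $D_g^2\subseteq D_g$ (valid since $D_g$ is an ideal of $D(X)$ by \cite[Lemma 2.4]{GR}), this yields $D_g=D_g^2$ for all $g\in\F$ in one stroke. The paper instead splits into cases: for $g\neq e$ it uses that $1_g\in D_g$ is a multiplicative identity for $D_g$, so $x=1_gx\in D_gD_g$; and for $g=e$ it constructs, for a given $x=\sum_i\lambda_i1_{p_iq_i^{-1}}+\sum_j\gamma_j1_{v_j}$, a single idempotent $y=\sum_{v\in V}1_v$ supported on the relevant source vertices and verifies $xy=x$ via the relation $1_{pq^{-1}}1_{s(pq^{-1})}=1_{pq^{-1}}$. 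The paper's computation thus establishes something slightly stronger than idempotency of $D_e$, namely that every element of $D_e$ admits an idempotent $y$ acting as a right identity on it (a local-unit type property), while your decomposition is more elementary, avoids the case distinction and the bookkeeping with source vertices, but does not produce such a single multiplier. Both arguments fully prove the proposition.
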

\begin{proof}
	For each $g\neq e$ we have that $D_g$ is idempotent, since it is generated by the central idempotent $1_g$. It remains to show that $D_e$ is idempotent. For this, it is enough to prove that $D_{e}D_{e}\supseteq D_{e}$, since the other inclusion is clear. 
	
	Let $x\in D_{e}$.  Then, by \eqref{eqdx}, we have that
$$x=\sum_{i=1}^{n}\lambda_{i}1_{p_{i}q_{i}^{-1}}+\sum_{i=1}^{m}\gamma_{j}1_{v_{j}},$$
where $p_{i},q_{i}\in W$ with $r(p_i)=r(q_i)$  and $p_iq_i\m$ in its reduced form, for each $i\in \{1,2,\ldots, n\}$, $v_j\in E^0$ for all $j\in \{1,2,\ldots, m\}$, and $\lambda_{i},\gamma_{j}\in \K.$ For $g,h\in \F$ set $s(gh\m)=s(g)$ if $g\neq e,$ and $s(h\m)=r(h).$   Moreover, write $$V=\{s(p_{i}q_{i}^{-1})\mid1\leq  i\leq n\}\cup\{v_j\mid 1  \leq  i\leq m \},$$ 
$y=\sum_{v\in V}1_{v},$ and  let $\delta$ be the  Kronecker delta function. Then,
%where $H=\{v\in E^{0}\mid v=s(p_{i}q_{i}^{-1})\textit{ for some }i\in \{1,\cdots,n\}\textit{ or } v=v_{j} \textit{ for some }j \in \{1,\cdots,m\} \}$, in this case
\begin{align*}
    xy&=\sum_{v\in V}\left(\sum_{i=1}^{n}\lambda_{i}1_{p_{i}q_{i}^{-1}}1_{v}+\sum_{i=1}^{m}\gamma_{j}1_{v_{j}}1_{v}\right)\\
    &=\sum_{v\in V}\left(\sum_{i=1}^{n}\lambda_{i}(\delta_{s(p_{i}q_{i}^{-1}),v}1_{p_{i}q_{i}^{-1}}1_{v})+\sum_{i=1}^{m}\gamma_{j}(\delta_{v_{j},v}1_{v_{j}}1_{v})\right)\\
    &=\sum_{i=1}^{n}\lambda_{i}1_{p_{i}q_{i}^{-1}}1_{s(p_{i}q_{i}^{-1})}+\sum_{i=1}^{m}\gamma_{j}1_{v_{j}}1_{v_{j}}\\
    &=\sum_{i=1}^{n}\lambda_{i}1_{p_{i}q_{i}^{-1}}+\sum_{i=1}^{m}\gamma_{j}1_{v_{j}}\\&= x,
\end{align*}
and $x\in D_{e}D_{e}$ as desired.
\end{proof}

As a consequence of Proposition \ref{idemp}, and the comment after Definition~\ref{skewr}, we obtain that the partial skew group ring $D(X)\star_{\alpha}\mathbb{F}$ is associative.  Below we describe when this partial skew ring is strongly graded (this result was proved, in the context of ultragraphs, in \cite[Theorem~6.3]{GR3}, but our proof is shorter and simpler).  

From now on, we consider only graphs with at least one edge, since otherwise, the free group on the edges is trivial.

\begin{prop}\label{loop}
Let $E$ be a graph with at least one edge. The ring $D(X)\star_{\alpha}\mathbb{F}$ is strongly $\mathbb{F}$-graded if, and only if, the graph $E$ is a loop.
\end{prop}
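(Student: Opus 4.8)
The plan is to use the criterion furnished by Proposition~\ref{globs}. Since Proposition~\ref{idemp} guarantees that every ideal $D_g$ is idempotent, the hypotheses of Proposition~\ref{globs} are met, so $D(X)\star_\alpha\F$ is strongly graded if and only if the partial action $\alpha$ is global, i.e.\ $D_g=D(X)$ for every $g\in\F$. Thus the entire problem reduces to determining exactly when all the ideals $D_g$ coincide with $D_e=D(X)$. This is the reformulation I would carry out first, since it converts a statement about products of homogeneous components into a concrete statement about the sets $X_g$ and the characteristic functions $1_g$.

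The next step is the easy direction. If $E$ is a single loop, with unique vertex $u$ and unique edge $e$, then $X$ consists of the single infinite path $eee\cdots$, so $X=X_g$ for every $g\in\F$ for which $X_g\neq\emptyset$; in fact one checks directly that $X_{e^n}=X_{e^{-n}}=X$ for all $n$ and that these are the only nonempty components, whence $D_g=1_gD_e=D_e$ for all relevant $g$. Equivalently, $\alpha$ is global, and strong grading follows from Proposition~\ref{globs}.

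For the converse I would argue by contraposition: assuming $E$ is not a loop, I exhibit some $g\in\F$ with $D_g\subsetneq D(X)$, so that $\alpha$ is not global. The natural choice is $g=a^{-1}$ for a single edge $a\in E^1$, giving $X_{a^{-1}}=\{\xi\in X\mid s(\xi)=r(a)\}$. The goal is to show this is a proper nonempty subset of $X$. Nonemptiness requires that from $r(a)$ one can reach a sink or continue infinitely; properness requires producing a path in $X$ whose source is \emph{not} $r(a)$. I would split into cases according to how $E$ fails to be a loop: if $E$ has more than one vertex, or more than one edge, or an edge that is not a loop, one can locate a path $\xi\in X$ with $s(\xi)\neq r(a)$, forcing $1_\xi\notin D_{a^{-1}}$ and hence $D_{a^{-1}}\neq D_e$. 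Care is needed because $X$ is built only from paths terminating at sinks together with infinite paths, so the main obstacle is verifying that $X$ genuinely contains an element avoiding the vertex $r(a)$ in every non-loop configuration—in particular handling graphs where a chosen edge might be the unique way out of a vertex. I expect this case analysis, ensuring $X\neq X_{a^{-1}}$ while keeping $X_{a^{-1}}$ nonempty, to be the delicate part; once a separating path is produced, comparing supports of the $1_g$ to conclude $D_{a^{-1}}\subsetneq D(X)$ is routine, and Proposition~\ref{globs} then finishes the proof.
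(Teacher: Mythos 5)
Your reduction via Proposition~\ref{globs} and Proposition~\ref{idemp} and your proof of the direction ``loop $\Rightarrow$ strongly graded'' match the paper. One caution even there: globality requires $D_g=D(X)$ for \emph{every} $g\in\F$, not merely for the $g$ with $X_g\neq\emptyset$, so your phrase ``for all relevant $g$'' is off; it is harmless here only because $\F$ is then the free group on the single loop edge, so every $g$ is a power of that edge and every $X_g$ equals $X$.

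The converse direction, however, contains a genuine gap: the proposed witness $g=a^{-1}$ fails, and the case analysis sketched to support it is false. Take $E$ to be the rose with two petals, i.e.\ one vertex $u$ and two loops $a,b$ based at $u$. This graph is not a loop in the paper's sense (it has two edges), yet every element of $X$ has source $u=r(a)$, so $X_{c^{-1}}=X$ and $D_{c^{-1}}=D(X)$ for every edge $c$. Hence there is \emph{no} path $\xi\in X$ with $s(\xi)\neq r(a)$, contradicting your claim that in the ``more than one edge'' case such a separating path can be located. Non-globality in this configuration is only visible on the positive components: $X_a=\{\xi\in X\mid \xi_1=a\}$ is a proper subset of $X$ because $X_b\neq\emptyset$ and $X_a\cap X_b=\emptyset$. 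This is exactly the paper's route: if $f\neq l$ are distinct edges, then $X_f\neq X_l$ (both nonempty and disjoint), so $1_f\neq 1_l$ and $D_f\neq D_l$, whence they cannot both equal $D(X)$, contradicting globality; this forces $E^1=\{f\}$, and then the orthogonality of the vertex functions, via $1_v=1_f1_{s(f)}1_v$ with $1_f$ the unit of $D_e$ in the global case, forces $v=s(f)$ and $v=r(f)$ for every vertex, so $E$ is a loop. Your argument could be repaired by adding the components $X_a$ for edges $a$ as witnesses (which covers the all-loops-at-one-vertex case), but as written the step ``one can locate $\xi\in X$ with $s(\xi)\neq r(a)$'' would fail.
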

\begin{proof}
Suppose that $D(X)\star_{\alpha}\mathbb{F}$ is strongly graded. It follows by Proposition~\ref{globs} and Proposition~\ref{idemp} that $\alpha$ is global. If there exists $f,l\in E^{1}$ with $l\neq f$ then we have that  $X_{l}\neq X_f$, %\{\alpha\in X\mid e \textit{ is an initial path of }\alpha\}\neq X_{f}=\{\alpha\in X\mid e \textit{ is an initial of }\alpha\}$ 
which implies that $1_{f}\neq 1_{l}$ and hence $D_{f}\neq D_{l}$, which contradicts that the action is global. So, $f=l$ and $E^{1}=\{f\}$. 
Let $v\in E^{0}$. Then, $1_{v}\in D_{e}=D_f$, and $1_{f}$ is the unit of $D_{e}$. Thus, $1_{v}=1_{f}1_{v}=1_{f}1_{s(f)}1_{v}$. %but $1_{e}=1_{e}1_{s(e)}$ which implies what $1_{v}=1_{e}1_{s(e)}1_{v}$ 
Since the functions $1_{u}$, with $u\in E^0$, are orthogonal, we get that $v=s(f)$. Analogously, we obtain that $v=r(f)$. Thus $s(f)=r(f)$  and the graph is a loop. 
 
 Conversely, suppose that $E$ is a loop with  $E^{1}=\{f\}$. Then $X=W^\infty=\{f^\infty\}$, with $f^\infty$ the infinite path that arises by concatenating the edge $f$, and $\mathbb{F}=\langle f\rangle,$ so that every element  $p\in \mathbb{F}$ is of the form $p=f^n,$ for some $n\in \Z$. Hence,  $X=X_{f}=X_{p},$ and 
 consequently $1_{f}=1_{p}$ for each $p\in \mathbb{F}$. Thus, $D_{p}=D_{f}$ for each $p\in\mathbb{F}$ and the action is global, that is $D(X)\star_{\alpha}\mathbb{F}$ is strongly graded.
\end{proof}

From now on we assume that $L_{\K}(E)$ has the $\mathbb{F}$-grading given by the isomorphism \eqref{isop}. %Since we require that $L_{K}(E)$ be unital, we consider that the graph has only a finite number of vertices.
Next, we describe when $L_{\K}(E)$ is $\mathbb{F}$-graded clean and $\mathbb{F}$-graded unit-regular. 

\begin{prop}\label{loop1}
Let $E$ be a graph with at least one edge. The Leavitt path algebra $L_{\K}(E)$ is $\F$-graded clean if, and only if, $L_{\K}(E)$ is strongly $\mathbb F-$graded.
\end{prop}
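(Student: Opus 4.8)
The plan is to show that each of the two conditions in the statement is equivalent to $E$ being a loop. Since Proposition~\ref{loop}, transported through the isomorphism \eqref{isop}, already gives that $L_\K(E)$ is strongly $\F$-graded if and only if $E$ is a loop, it suffices to prove that $L_\K(E)$ is $\F$-graded clean if and only if $E$ is a loop. I work throughout inside $D(X)\star_\alpha\F$, whose homogeneous component of degree $g$ is $D_g\delta_g$. Note that graded cleanness presupposes a unital ring, so I may assume $E^0$ is finite, in which case the identity is $1_e\delta_e$ with $1_e=\sum_{v\in E^0}1_v=\chi_X$.

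For the forward implication, suppose $L_\K(E)$ is $\F$-graded clean, so by Lemma~\ref{cleang} every nonzero element of $D_g\delta_g$ with $g\neq e$ is invertible. Fix an edge $f\in E^1$. First I would check that $1_f\delta_f\neq 0$, i.e.\ $X_f\neq\emptyset$: starting from $f$ and repeatedly extending (either reaching a sink or producing an infinite path) yields an element of $X$ beginning with $f$. Hence $1_f\delta_f$ must be invertible, and since the inverse of a homogeneous unit is homogeneous, it has the form $z\delta_{f^{-1}}$ with $z\in D_{f^{-1}}$. Using \eqref{alpha} (which gives $\alpha_{f^{-1}}(1_f)=1_{f^{-1}}$, the identity of $D_{f^{-1}}$) together with \eqref{prodc}, I compute
\[
(1_f\delta_f)(z\delta_{f^{-1}})=\alpha_f\bigl(\alpha_{f^{-1}}(1_f)\,z\bigr)\delta_e=\alpha_f(z)\,\delta_e .
\]
Setting this equal to $1_e\delta_e$ forces $\alpha_f(z)=1_e$; but $\alpha_f(z)\in D_f=1_fD_e$, so $1_e\in D_f$ and therefore $1_e=1_f1_e=1_f$, that is, $X_f=X$.

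From $X_f=X$ I would extract that $E$ is a loop. Since every element of $X$ begins with the edge $f$, no sink can lie in $X$, so $E$ has no sinks; in particular $X$ contains an infinite path and is nonempty. If a second edge $l\neq f$ existed, the same argument would give $X_l=X$, forcing every path in $X$ to begin with both $f$ and $l$, which is impossible; hence $E^1=\{f\}$. As $r(f)$ is not a sink it must emit an edge, necessarily $f$, so $s(f)=r(f)=:v$ and $f$ is a loop; and any vertex $w\neq v$ would emit no edge and thus be a sink, contradicting the absence of sinks. Thus $E^0=\{v\}$ and $E$ is a loop, so by Proposition~\ref{loop} the ring $L_\K(E)$ is strongly $\F$-graded. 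Conversely, if $L_\K(E)$ is strongly $\F$-graded then Proposition~\ref{loop} gives that $E$ is a loop, whence $X=\{f^\infty\}$ is a single point, $F(X)\cong\K$, and $X_g=X$ for every $g\in\F=\langle f\rangle$; therefore $D_g=\K 1_e$ for all $g$ and $D(X)\star_\alpha\F\cong\K[t,t^{-1}]$ with its natural $\F$-grading. Here $R_e\cong\K$ is clean and each nonzero $\lambda\,1_e\delta_g$ with $g\neq e$ is invertible with inverse $\lambda^{-1}1_e\delta_{g^{-1}}$, so Lemma~\ref{cleang} shows the ring is $\F$-graded clean.

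I expect the main obstacle to be the forward computation showing that invertibility of the degree-$f$ element $1_f\delta_f$ forces $1_f=1_e$; this is where the partial-action identity \eqref{alpha} and the description $D_f=1_fD_e$ do the essential work. Once that is in place, the graph-theoretic deductions (no sinks, a unique edge, a unique vertex) and the verification in the loop case are routine.
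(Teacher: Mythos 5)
Your proof is correct, but its forward direction takes a genuinely different route from the paper's. The paper transports the inverse of $1_f\delta_f$ back to $L_{\K}(E)$ via $\varphi$ and argues with the Leavitt path algebra relations: writing $fu=1$, orthogonality of vertices gives $0\neq v=v(fu)=(vf)u=0$ for any vertex $v\neq s(f)$, so there is a single vertex; then from $f^*f=r(f)=s(f)=1=fu$ it deduces $f^*=u$, hence $ff^*=1$, and a partial-order argument on the idempotents $\sum_i f_if_i^*$ kills any second edge $l$ (since $ll^*=0$ forces $l=lr(l)=ll^*l=0$). You instead stay entirely inside $D(X)\star_\alpha\F$ and exploit the dynamics: invertibility of $1_f\delta_f$ forces its homogeneous inverse $z\delta_{f^{-1}}$ to satisfy $\alpha_f(z)=1_e$, which places $1_e$ in $D_f=1_fD_e$ and yields $X_f=X$; the graph-theoretic conclusions then fall out of the structure of the sets $X_g$ --- no sinks, since a sink is a point of $X$ outside $X_f$; a unique edge, since distinct cylinders $X_f$ and $X_l$ are disjoint while $X\neq\emptyset$; and a unique vertex, since any other vertex would be a sink. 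Your route is closer in spirit to the paper's proof of Proposition~\ref{loop2} (which uses exactly the disjointness $X_f\cap X_l=\emptyset$), treats extra vertices and second edges by one uniform mechanism where the paper needs two separate arguments, and explicitly verifies the nondegeneracy $X_f\neq\emptyset$, which the paper leaves implicit; the paper's argument, in exchange, shows how the ring relations of $L_{\K}(E)$ alone pin down the graph without reference to the underlying partial action on $X$. The converse directions are essentially identical computations, both resting on Proposition~\ref{loop} and Lemma~\ref{cleang}.
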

\begin{proof}
Suppose that $L_{\K}(E)\cong D(X)\star_{\alpha}\mathbb{F}$ is $\mathbb{F}$-graded clean. Then it is unital and thus has a finite number of vertices. By Proposition~\ref{loop} we must verify that the graph is a loop. We first prove that the graph has only one vertex. Let $f\in E^{1}$ and suppose that there exists $v\neq s(f)$. Then, by Lemma \ref{cleang}, $0\neq 1_{f}\delta_{f}\in D_{f}\delta_{f}$ is invertible with inverse $w$ write $w=\varphi(u),$ where $\varphi$ is the ring isomorphism defined in  \eqref{isop}. Then $0\neq v =v(fu)=(vf)u=0,$
which is a contradiction. Therefore, $s(f)$ is the only vertex in $E$ and $1_{L_{\K}(E)}=s(f)$. Now, we prove that $f$ is the only edge of $E$. For this, we consider the set of idempotent elements
 $$S=\left\{\sum_{i=1}^{n}f_{i}f_{i}^{*}\in L_{\K}(E)\mid f_ {i}\in E^{1} \textit{ and }n\in \mathbb{N}\right\}$$
and define a partial order in $S$ by $p\leq q$ if, and only if, $pq=qp= p$. Take an edge $l\neq f$. Since $f^{*}f=r(f)=s(f)=1_{L_{\K}(E)}=fu$, we have that $f^{*}=f^{*} (f u)=(f^{*} f) u= u$. Thus, $ff^{*}=1_{L_{\K}(E)}$ and so $ff^{*}\geq ff^{*}+ll^{*}$. %{\color{red} Can you explain more why $ff^{*}\geq ff^{*}+ll^{*}$? I don´t see why $ff^*(ff^*-ll^*)=ff^*-ll^*$} 
Clearly, $ff^{*}\leq ff^{*}+ll^{*} $. Hence $ff^{*}=ff^{*}+ll^{*}$, which implies that $ll^{*}=0$ and thus $l=lr(l)=ll^{* }l=0l=0$, which is a contradiction. It follows that $E$ is a loop and therefore $L_{\K}(E)$ is strongly graded.

Conversely, suppose that $D(X)\star_{\alpha}\mathbb{F}$ is strongly graded. By Proposition~\ref{loop}, the graph must be a loop with $E^{0}=\{v\}$ and $E^{1}=\{f\}$, in which case $X=\{f^\infty\}$  and $1_{v}\delta_{e}$ is the unit of
$D(X)\star_{\alpha}\mathbb{F}$. Since $1_{p}=1_{v}$ for all $p\in \mathbb{F}$, we have by  Equations \eqref{prodc} and  \eqref{alpha} that $(\lambda 1_{p}\delta_{p})(\alpha_{p\m}(\lambda\m1_{p})\delta_{p^{-1}})=1_{v}\delta_{e}$ for any $\lambda\in \K, \lambda\neq 0$,  that is, each  nonzero element of $D_{p}\delta_{p}$ has an  inverse and  $D_{e}\delta_{e}$ is clean. Therefore, by Lemma~\ref{cleang}, $D(X)\star_{\alpha}\mathbb{F}$ is graded clean.
\end{proof}

\begin{prop}\label{loop2} Suppose that $E$ has at least one edge. The Leavitt path algebra $L_{\K}(E)$ is $\F$-graded unit-regular if, and only if, $E$ is a loop.
\end{prop}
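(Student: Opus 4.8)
The plan is to prove both implications, observing that each is controlled by the component $D_f\delta_f$ attached to a single edge $f$. I begin with the converse, which is the quick half. If $E$ is a loop, then the computation in the converse of Proposition~\ref{loop1} already shows that $E^0=\{v\}$, $E^1=\{f\}$, $X=\{f^\infty\}$, every $D_p$ equals $D_e=\K 1_v$, and every nonzero homogeneous element $\lambda 1_p\delta_p$ is invertible. Hence, for a homogeneous $x$, I take $u=x^{-1}$ when $x\neq 0$ (a homogeneous invertible element, lying in the opposite component) and any $u$ when $x=0$; in either case $x=xux$, so $L_\K(E)$ is $\F$-graded unit-regular.

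For the forward implication, suppose $L_\K(E)\cong D(X)\star_\alpha\F$ is $\F$-graded unit-regular. Being unital forces $E^0$ to be finite, so the identity is $1_{D_e}\delta_e$ with $1_{D_e}=\sum_{v\in E^0}1_v=\chi_X$. Fix an edge $f$. Since $X_f\neq\emptyset$ (either $f$ reaches a sink or extends to an infinite path), $D_f=1_fD_e\neq 0$, so by the lemma ensuring that every nonzero homogeneous component of a graded unit-regular ring contains an invertible element, the component $D_f\delta_f$ contains an invertible $w=d\delta_f$. Its inverse is homogeneous of degree $f^{-1}$, say $w^{-1}=d'\delta_{f^{-1}}$, and using \eqref{prodc} the equality $ww^{-1}=1_{D_e}\delta_e$ becomes $\alpha_f(\alpha_{f^{-1}}(d)d')=1_{D_e}$. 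The left-hand side lies in $\operatorname{im}\alpha_f=D_f=1_fD_e$, whence $1_{D_e}\in D_f$ and, since $1_f$ is the identity of $D_f$, $1_{D_e}=1_f$; that is, $\chi_X=\chi_{X_f}$ and $X=X_f$.

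It remains to translate $X=X_f$ into the assertion that $E$ is a loop, and this is the step I expect to carry the real weight. Every element of $X_f$ has length at least one and first edge $f$; since a sink vertex sits in $X$ as a length-zero path, $X=X_f$ forces $E$ to have no sinks, so $X=W^\infty$ and every vertex emits an edge. If some edge $l\neq f$ existed, the no-sink property would let me extend $l$ to an infinite path beginning with $l$, contradicting $X=X_f$; hence $E^1=\{f\}$. Finally, $r(f)$ is not a sink and the only available edge is $f$, forcing $r(f)=s(f)$ and $E^0=\{s(f)\}$, so $E$ is a loop (and, by Proposition~\ref{loop}, $L_\K(E)$ is strongly graded).

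The main obstacle is precisely this forward step: graded unit-regularity, unlike graded cleanness, only guarantees the existence of \emph{one} invertible element in each nonzero component rather than invertibility of all of them, so I cannot invoke Lemma~\ref{cleang} and argue componentwise as in Proposition~\ref{loop1}. The leverage comes instead from forcing the global identity $1_{D_e}$ into the image of $\alpha_f$, which collapses $X$ onto $X_f$ and then pins down the graph combinatorially.
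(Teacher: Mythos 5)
Your proof is correct, but your forward implication takes a genuinely different route from the paper's. The paper argues by contradiction with a two-case analysis: if there are two distinct edges $l\neq f$, it expands the unit $u$ coming from unit-regularity of $1_l\delta_l$ (and its inverse) as explicit finite linear combinations $\sum_j\lambda_j 1_{l^{-1}}1_{q_j}\delta_{l^{-1}}$, computes $u(1_f\delta_f)=0$ from $X_l\cap X_f=\emptyset$, and derives the contradiction $X_f=\emptyset$; if instead there is a single edge plus isolated vertices, it evaluates the expanded product $u^{-1}u=\sum_{v\in E^0}1_v\delta_e$ at a sink $w\neq s(l)$ to get $0=1$. You avoid both the case split and the coefficient expansions: you invoke the paper's lemma that every nonzero component of a graded unit-regular ring contains an invertible element, apply it to $D_f\delta_f$ for an \emph{arbitrary} edge $f$ (your observation that $X_f\neq\emptyset$ is needed here, and is also implicit in the paper's Case 1 contradiction), and then use only the single equation $ww^{-1}=1_{D_e}\delta_e$ together with the structural fact that its $\delta_e$-coefficient $\alpha_f(\alpha_{f^{-1}}(d)d')$ lies in $\operatorname{im}\alpha_f=D_f$ to force $1_{D_e}=1_f$, i.e.\ $X=X_f$; the graph-theoretic conclusions (no sinks, hence no isolated vertices, a unique edge, a unique vertex) then fall out uniformly. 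This is cleaner and more conceptual: it shows directly that graded unit-regularity forces $D_e\subseteq D_f$, hence $D_f=D_e$ for every edge, which makes the connection to globality of the partial action and Proposition~\ref{loop} transparent, whereas the paper's computations hide this mechanism inside two separate contradictions. The only step you assert without proof is that the inverse of a homogeneous unit is homogeneous of the inverse degree; this is a standard fact about group-graded rings (and is implicitly used in the paper's own lemma), so the gap is cosmetic. Your converse direction coincides with the paper's, modulo the harmless extra remark handling $x=0$.
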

\begin{proof} Assume that the Leavitt path algebra $L_{\K}(E)$ is graded unit-regular, and suppose by contradiction that the graph is not a loop. This implies that either there are at least two distinct edges in $E^{1}$ or there is a single edge with a finite number of isolated vertices. We separate the proof accordingly to these two cases. 

{\bf Case 1:} Suppose that there are two distinct edges, say $l,f\in E^{1}$. 

Then, by hypothesis, there exists an invertible element $u\in D_{l^{-1}}\delta_{l^{-1}}$ such that $(1_{ l}\delta_{l})u(1_{l}\delta_{l})=1_{l}\delta_{l}$. Write
     $$u=\sum_{j=1}^{n}\lambda_j1_{l^{-1}}1_{q_{j}}\delta_{l^{-1}}\quad \text{ and  } \quad u^{-1 }=\sum_{i=1}^{m}\beta_i1_{l}1_{p_{i}}\delta_{l},$$
     with  $\lambda_j, \beta_i\in \K$ and $q_{j},p_{i}\in \mathbb{F}$  for all $i,j$. We know that $u^{-1}u(1_{f}\delta_{f})=1_{f}\delta_{f}$, but since $l\neq f$ we have by  equations \eqref{prodc} and  \eqref{alpha} that
    \begin{align*}
      u(1_{f}\delta_{f})&=\left(\sum_{j=1}^{n}\lambda_j1_{l^{-1}}1_{q_{j}} \delta_{l^{-1}}\right)(1_{f}\delta_{f})\\
      &= \alpha_{l^{-1}}\left(\alpha_{l}\left(\sum_{j=1}^{n}\lambda_j1_{l^ {-1}}1_{q_{j}}\right)1_{f}\right)\delta_{l^{-1}f}\\
      &=\sum_{j=1}^{n}\lambda_j\alpha_{l^{-1}}(1_{l}1_{lq_{j}}1_{f})\delta_{l^{-1}f}
     \\
     &=\sum_{j=1}^{n}\lambda_j\alpha_{l^{-1}}(0)\delta_{l^{-1}f}=0,
    \end{align*}
    where we note that $1_{l}1_{lq_{j}}1_{f}=0$ for all $j\in \{1,\cdots, n\}$ since $X_{f}\cap X_{l} =\emptyset$.
    
    The above implies that $u^{-1}u(1_{f}\delta_{f})=u^{-1}0=0=1_{f}\delta_{f}$ and hence $X_{ f}=\emptyset$, a contradiction.

{\bf Case 2}  Suppose that $E$ has a single edge $l$ together with a finite number of isolated vertices. We show that the graph $E$ cannot have sinks. Let $u$ be as in Case~1.  Observe that
    \begin{align*}
    u^{-1}u&=\left(\sum_{i=1}^{m}\beta_i1_{l}1_{p_{i}}\delta_{l}\right)\left(\sum_{j=1}^{n}\lambda_j1_{l^{-1}}1_{q_{j}}\delta_{l^{-1}}\right)\\ %&=\alpha_{e}\left(\alpha_{e^{-1}}\left(\sum_{i=1}^{m}1_{e}1_{p_{i}}\right)\sum_{j=1}^{n}1_{l^{-1}}1_{q_{j}}\right)\delta_{e}\\
    &=\sum_{i=1}^{m}\sum_{j=1}^{n}\beta_i\lambda_j\alpha_{l}(\alpha_{l^{-1}}(1_{l}1_{p_{i}})1_{l^{-1}}1_{q_{j}})\delta_{e}\\
    &=\sum_{i=1}^{m}\sum_{j=1}^{n}\beta_i\lambda_j1_l1_{p_{i}}1_{lq_{j}}\delta_{e},
    \end{align*}
    which gives
    $$\sum_{i=1}^{m}\sum_{j=1}^{n}\beta_i\lambda_j1_l1_{p_{i}}1_{lq_{j}}\delta_{e}=\sum_{v\in E^{0}}1_{v}\delta_{e}=1_{D(X)\star_{\alpha}\mathbb{F}}.$$
    Hence, 
    $$\sum_{i=1}^{m}\sum_{j=1}^{n}\beta_i\lambda_j1_{l}1_{p_{i}}1_{lq_{j}}=\sum_{v\in E^{0}}1_{v}$$
    and taking $w\in E^{0}$  a sink with  $s(l)\neq w$ we get
    $$\sum_{i=1}^{m}\sum_{j=1}^{n}\beta_i\lambda_j1_{l}1_{p_{i}}1_{lq_{j}}(w)=0,$$

    but $\sum_{v\in E^{0}}1_{v}(w)=1_{w}(w)=1$, which gives a contradiction. From this we get that $E$ is a  graph with one edge and one vertex (note that $s(f)=r(f)$ otherwise $r(f)$ is a sink), that is,  $E$ is a loop.

Conversely, assume that the graph is a loop with $E^{1}=\{f\}$ and $E^{0}=\{v\}$. Note that $X=\{f^\infty\}$ and $X_{p}=X_{v}=X_{p^{-1}}$ for all $p\in \mathbb{F}$, which implies that $1_{p}=1_{v}=1_{p^{-1}}$ for all $p\in \mathbb{F}$. Furthermore, $1_{v}\delta_{e}$ is the identity of $D(X)\star_{\alpha}\mathbb{F}$. With this in mind, notice that for each $p\in \mathbb{F}$ and $\lambda\in \K$ non-zero, we have that
$(\lambda 1_{p}\delta_{p})(\alpha_{p\m}(\lambda\m1_{p})\delta_{p^{-1}})(\lambda1_{p}\delta_{p})=1_{p}\delta_{p}.$ % since
%$(1_{p}\delta_{p})(1_{p^{-1}}\delta_{p^{-1}})=1_{p}\delta_{e}=1_{v}\delta_{e}$ and $(1_{p^{-1}}\delta_{p^{-1}})(1_{p}\delta_{p})=1_{p^{-1}}\delta_{e}=1_{v}\delta_{e}$, from which we can conclude that 
Hence, $D(X)\star_{\alpha}\mathbb{F}$ is graded unit-regular.
\end{proof}

 Combining Propositions \ref{loop}, \ref{loop1} and \ref{loop2}, we obtain our main
 theorem.

\begin{thm} \label{loop3} Let $E$ be a graph with at least one edge. Consider the Leavitt path algebra $L_{\K}(E)$ endowed with the $\mathbb{F}$-grading induced by the isomorphism \eqref{isop}. The following assertions are equivalent.
\begin{itemize}
\item  $L_{\K}(E)$ is strongly graded;
\item  $L_{\K}(E)$ is  graded clean;
\item $L_{\K}(E)$ is graded unit-regular;
\item $E$ is a loop.
\end{itemize}
\end{thm}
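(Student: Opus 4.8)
The plan is to assemble the theorem directly from the three propositions already established, using only the fact that the map $\varphi$ of \eqref{isop} is an isomorphism of $\F$-graded $\K$-algebras. Since we have fixed on $L_\K(E)$ precisely the $\F$-grading transported along $\varphi$, every property phrased purely in terms of the $\F$-grading---being strongly graded, graded clean, or graded unit-regular---holds for $L_\K(E)$ if and only if it holds for $D(X)\star_\alpha\F$. This lets me pass freely between the two models and reduces the theorem to a chaining of biconditionals.

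First I would record the equivalence that $L_\K(E)$ is strongly graded if and only if $E$ is a loop. Proposition~\ref{loop} states exactly this for $D(X)\star_\alpha\F$, and by the graded isomorphism $\varphi$ the statement transfers to $L_\K(E)$; this identifies the first and the last bullet. Next, Proposition~\ref{loop1} gives, verbatim for $L_\K(E)$, that graded cleanness is equivalent to being strongly graded, linking the second bullet to the first. Finally, Proposition~\ref{loop2} asserts that $L_\K(E)$ is graded unit-regular if and only if $E$ is a loop, linking the third bullet to the last. Putting the three biconditionals side by side---strongly graded $\iff$ $E$ a loop, graded clean $\iff$ strongly graded, graded unit-regular $\iff$ $E$ a loop---shows that each of the four assertions is equivalent to the single condition ``$E$ is a loop,'' whence they are mutually equivalent.

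The argument presents essentially no obstacle, since all of the substantive work resides inside Propositions~\ref{loop}, \ref{loop1}, and \ref{loop2}. The only point deserving a word of care is the transfer of the ``strongly graded'' property across $\varphi$: because $\varphi$ is a degree-preserving $\K$-algebra isomorphism, it carries each homogeneous component $D_g\delta_g$ onto the homogeneous component of $L_\K(E)$ of degree $g$, and strong grading is the family of identities $R_gR_h=R_{gh}$ for all $g,h\in\F$, which is manifestly preserved by any graded isomorphism. Once this observation is in place, the theorem follows immediately by concatenating the three known equivalences; no further computation is required.
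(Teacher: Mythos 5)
Your proposal is correct and matches the paper's argument exactly: the paper proves Theorem~\ref{loop3} by simply combining Propositions~\ref{loop}, \ref{loop1} and \ref{loop2}, just as you do. Your added remark that graded properties transfer along the degree-preserving isomorphism $\varphi$ is a reasonable (and harmless) explicit justification of a point the paper leaves implicit.
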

 Since for graphs with a single edge, the free group on the edges coincides with the integers we get, from  Theorem \ref{loop3} and \cite[Proposition 1.3.4]{AA}, the following result.
\begin{cor} There is a  $\Z$-grading in  the Laurent polynomial $\K$-algebra $\K[x,x\m]$, for which the following assertions are equivalent.
    \begin{itemize}
\item  $\K[x,x\m]$ is strongly graded;
\item  $\K[x,x\m]$ is  graded clean;
\item $\K[x,x\m]$ is graded unit-regular.
\end{itemize}\end{cor}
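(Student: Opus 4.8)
The plan is to specialize Theorem~\ref{loop3} to the loop and transport it along the classical isomorphism between the Leavitt path algebra of a loop and the Laurent polynomial algebra. I would begin by letting $E$ be the loop, so that $E^0=\{v\}$ and $E^1=\{f\}$. Then the free group $\F$ on the single edge is infinite cyclic, and the canonical identification $f\mapsto 1$ gives $\F\cong\Z$; under this identification the $\F$-grading from \eqref{isop} assigns degree $1$ to $f$, degree $-1$ to $f^*$, and degree $0$ to $v$, which is exactly the standard $\Z$-grading of $L_\K(E)$. This is the $\Z$-grading that the corollary asserts to exist.

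Next I would invoke \cite[Proposition 1.3.4]{AA}, which provides a $\K$-algebra isomorphism $L_\K(E)\cong\K[x,x\m]$ sending $f\mapsto x$ and $f^*\mapsto x\m$. The key observation is that this isomorphism becomes graded once $\K[x,x\m]$ carries the usual grading with $\K x^n$ sitting in degree $n$: the $\F$-homogeneous component of degree $n$ is carried onto $\K x^n$. Consequently each of the three properties in the statement, being preserved under graded isomorphisms, holds for $\K[x,x\m]$ with this grading if and only if it holds for $L_\K(E)$ with the $\F$-grading.

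Finally, applying Theorem~\ref{loop3} to $E$ and using that $E$ is a loop, all three properties hold for $L_\K(E)$, hence all three hold for $\K[x,x\m]$; in particular they are pairwise equivalent (the equivalence being, in fact, the trivial one in which all three statements are true). The only step requiring genuine verification is the compatibility of the isomorphism of \cite[Proposition 1.3.4]{AA} with the two gradings, and I expect this matching of homogeneous components to be the main, though entirely routine, point of the argument; once it is in place the equivalence follows at once from Theorem~\ref{loop3}.
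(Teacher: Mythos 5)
Your proposal is correct and follows exactly the route the paper intends: the paper deduces the corollary in one line by noting that for a single-edge graph the free group on the edges is $\Z$, and then combining Theorem~\ref{loop3} with the isomorphism $L_\K(E)\cong\K[x,x\m]$ of \cite[Proposition 1.3.4]{AA}. Your additional verification that this isomorphism matches the $\F$-grading of the loop with the standard $\Z$-grading on $\K[x,x\m]$ (so all three conditions hold, making the equivalence trivially true) is precisely the routine step the paper leaves implicit.
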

\begin{rem} Regarding Theorem \ref{loop3}, in \cite[Theorem 5.9]{PMS} it is shown that, for $n\in \Z^+$, there is a group $G$ such that the Leavitt path algebra associated with the graph \begin{center}
		$E_n:= \xymatrix{\bullet_{v_1} \ar@(ur,dr)^-{f_1}& \bullet_{v_2}\ar@(ur,dr)^-{f_2} & \bullet_{v_3}\ar@(ur,dr)^-{f_3} & \cdots &\bullet_{v_n}\ar@(ur,dr)^-{f_n} & \bullet_{v_{n+1}},
		}$  
  \end{center} 
  endowed with a certain canonical $G$-grading, is a sum of strongly  $G$-graded  rings and a ring with a trivial $G$-grading.  \footnote{Recall that a $G$-grading on a ring $R=\bigoplus_{g\in G}R_g$ is trivial if $R_e=R$ and $R_g=\{0\},$ for every $ g\neq e$.}
\end{rem}
\section{On the isomorphism between $D(X)\star_{\alpha}\mathbb{F}$ and $L_{\K}(E)$ }
\noindent We finish this work proposing a slight modification on the partial skew ring $D(X)\star_{\alpha}\mathbb{F}$ constructed in \cite{GR}, but still obtaining the algebra $L_{\K}(E)$ (in the case of graphs without isolated points). In \cite{GR}, the definition of the algebra $D(X)$ requires to endogenously add the characteristic functions $1_{v}$, for $v\in E^{0}$, to $D(X)$. In this section, we describe a construction where such maps are replaced with formal sums. For the reader's convenience, we recall the notion of formal series below.
\begin{defi}
A formal series on a $\K$-algebra $A$ is an infinite sum considered independent of any notion of convergence and can be manipulated with the usual algebraic operations on series. More precisely, the formal series satisfy the following rules:
\begin{itemize}
    \item $\displaystyle\sum_{i=1}^{\infty}a_{i}+\displaystyle\sum_{i=1}^{\infty}b_{i}=\displaystyle\sum_{i=1}^{\infty}(a_{i}+b_{i})$;
    \item $k\cdot\displaystyle\sum_{i=1}^{\infty}a_{i}= \displaystyle\sum_{i=1}^{\infty}k\cdot a_{i}$ for all $k\in \K$;
    \item $\left(\displaystyle\sum_{i=1}^{\infty}a_{i}\right)\left(\displaystyle\sum_{i=1}^{\infty}b_{i}\right)=\left(\displaystyle\sum_{i=1}^{\infty}c_{i}\right)$, where $c_{i}=\displaystyle\sum_{i=m+n}a_{m}a_{n}$.
\end{itemize}
\end{defi}
\noindent Following what is done in \cite{GR} (and recalled in Section~\ref{LPASR}), we keep the notations used to define the partial action $(X_{p}, \theta_{p})_{p\in \mathbb{F}}$ at the set level, and its induced partial action $(F(X_{p}), \alpha_{p})_{p\in \mathbb{F}}$, on the algebra level (where $F(X_p)\subseteq F(X)$ is the $\K$-algebra of functions from $X_p$ to $\K$).%again this action induces a very "large" partial skew group ring for the isomorphism to occur. 

Let $E$ be a graph without isolated vertices and let $v\in E^{0}$. If $v$ is an infinite emitter we consider the formal series $\displaystyle\sum_{i=1}^{\infty}1_{{(f_{i})}_{v}}, $ where $\{(f_{ 1})_{v},(f_{2})_{v},\cdots\}$ is the set of all edges such that $s(f)=v$. Define
$$D^s(X)=span_{\K}\left\{\{1_{p}\mid p\in \mathbb{F}\setminus \{0\}\}\cup \left\{\sum_{i=1}^{\infty}1_{{(f_{i})}_{v}}: v \text{ is an infinite emitter} \right\}\right\},$$ where $span_{\K}$ means the $\K$-linear span. Notice that each element of $D^s(X)$ can be seen as a formal sum, but the infinite formal sums only appear in the presence of infinite emitters. 

To define a partial action we will use \cite[Proposition~4.10]{Exel} and follow similar ideas to the ones presented in \cite[Section~5.1]{subshift}. For this, in the language of \cite[Section~4]{Exel}, we need to define a partial symmetry for each edge of the graph. In our case, such a partial symmetry is a bijection between subsets of $D^s(X)$, which we define in the following way:
For each $f\in E^1$, let $D^s_f=1_f D^s(X)$,  $D^s_{f^{-1}}=1_{f^{-1}} D^s(X)$ and let $\beta_f:D^s_{f^{-1}} \rightarrow D^s_f$ be the bijection obtained by defining, for $h\in \mathbb F$, $\beta_f(1_{f^{-1}} 1_h) = 1_f 1_{fh}$ and extending it linearly (including to infinite formal sums). Notice that $D^s_f$ and $D^s_{f^{-1}}$ are ideals of $D^s(X)$ for each $f\in E^1$. Furthermore, $\beta_f$ and $\beta_{f^{-1}}$  are ring homomorphisms, for any $f\in E^1$. Indeed, to verify this last statement, it is enough to check that
$$\beta_f((1_{f^{-1}} 1_h)( 1_{f^{-1}} 1_t)) =\beta_f(1_{f^{-1}} 1_h) \beta_f(1_{f^{-1}} 1_t),$$ for every $h,t\in \mathbb F$, which follows from the fact that the product of two functions $1_p$ and $1_r$, with $p,r\in \mathbb F$, is either equal to zero or equal to one of then.
Now, applying \cite[Proposition~4.10]{Exel} to the family $\{\beta_f\}_{f\in E^1}$, we obtain a unique set-theoretical partial action of the free group $\F$ on the edges on the set $D^s(X)$, which we denote by $(D^s_t,\beta_t)_{t\in \mathbb F}.$ It also follows from \cite[Proposition~4.10]{Exel} that $(D^s_t,\beta_t)_{t\in \mathbb F}$    is semi-saturated, in the sense that $\beta_{ts}=\beta_t \beta_s$ for all $s,t\in \mathbb F$ such that $|s+t|=|s|+|t|,$ where $|g|$ denotes the length of the word $g$ in $\F$. The partial action $(D^s_t,\beta_t)_{t\in \mathbb F}$ is also a partial action on the ring level because the maps $\beta_t, t\in \F$, are homomorphisms between ideals of $D^s(X)$ (since the generating partial symmetries are homomorphisms).  Moreover, for all $a,b \in E^1,$ the ideals $D^s_a$ and $D^s_b$ are orthogonal, that is $D^s_a \cap D^s_b=\{0\}.$ To finish the characterization  of  $(D^s_t,\beta_t)_{t\in \mathbb F}$, observe that if $a,b$  are  paths such that $r(a)=r(b)$ then, from the definition of $\beta$ and the fact that $(D^s_t,\beta_t)_{t\in \mathbb F}$ is semi-saturated, we have that $D^s_{ab^{-1}} =  1_{ab^{-1}}D^s(X)$ and, from orthogonality, %of $\{D^s_t,\beta_t\}_{t\in \mathbb F}$, 
we get that $D^s_t=\emptyset$ if $t\neq ab^{-1}$ for some paths $a,b$ such that $r(a)=r(b)$.

%$p\in \mathbb{F}\setminus \{0\}$, let $D_{p}\subseteq F(X_{p})$ be defined as $1_{p}D(X)$; that is $D_{p} = span\{1_{p}1_{q} \mid q\in \mathbb{F}\}.$
%{\color{red}{\begin{rem}Notice that each element in $D(X)$ can be expressed as a formal series, where all summands in $1_{p}, p\in \mathbb{F}$ are zero except for  $1_p$ itself. \end{rem}}

%Again, we define $\alpha_{p}:D_{p^{-1}}\to D_{p}$ as the restriction of the isomorphisms $\alpha_{p}$ of the partial action  (these isomorphisms do not change for $p\in \mathbb{F}\setminus\{e\}$ and $\alpha_{e}$ remains as the identity function) and so we get a partial action $\alpha=(D_p, \alpha_{p})_{p\in \mathbb{F }}$ and the associated partial skew group ring $D(X)\star_{\alpha}\mathbb{F}$. 

Next, we will construct an isomorphism between $L_{\K}(E)$ and $D^s(X)\star_{\beta}\mathbb{F}$, and for this we need the following remark.

\begin{rem}\label{x's}
Recall that if $v=r(f)$, then $X_{v}=X_{f^{-1}}$ and so $1_{v}=1_{f^{-1}}$.
\end{rem}

The desired isomorphism between $L_{\K}(E)$ and $D^s(X)\star_{\beta}\mathbb{F}$ is obtained following closely the steps done in the proof of \cite[Proposition 3.2.]{GR}. Therefore, we just highlight the main differences. The key point is that we need to find substitutes for the functions $1_{v}$ used in \cite[Proposition 3.2.]{GR}. Let us analyze the possible cases:
\begin{itemize}
    \item If $v$ is a sink: Since the graph has no isolated vertices, then there exists $f\in E^{1}$ such that $r(f)=v$. So, we can choose $f_{v}\in \{g\in E^{1}\mid r(g)=v\}$ and thus replace the function $1_{v}$ by $1_{f_{v}^{-1}}$ (note that $1_{v}=1_{f_{v}^{-1}}$ by Remark \ref{x's}).
    
    \item If $v$ is a regular vertex: In this case, Item~(4) in Definition~\ref{lpa} suggests that we  replace $1_{v}$ by the  sum $\displaystyle\sum_{i=1}^{n}1_{{(f_{i})}_{v}},$ where $\{f\in E^{1}\mid s(f)=v\}=\{{(f_{1})_{v}},\ldots,{(f_ {n})_{v}}\}$. 
    \item If $v$ is an infinite emitter: We replace $1_{v}$ by the formal series $\displaystyle\sum_{i=1}^{\infty}1_{{(f_{i})}_{v}} $, since $\{f_{i}\in E^{1}\mid s(f_{i})=v\}=X_{v}=\displaystyle\bigcup_{i=1}^{\infty}X_{{(f_{i})}_{v}}$.
\end{itemize}

 With the above, one can show that the sets $\{1_{f}\mid f \in E^{1}\}$, $\{1_{f^{-1}}\mid f\in E^{1}\}$ and $$\{1_{f_{v}^{-1}}\delta_{f}\mid v\text{ is a sink} \}\cup \left\{\displaystyle\sum_{i=1}^{n_v}1_{{(f_{i})}_{v}}\mid v \text{ is a regular vertex }\right\} \cup$$   $$\left\{\displaystyle\sum_{i=1}^{\infty}1_{{(f_{i})}_{v}}\mid v \text{ is an infinite emitter}\right\},$$ satisfy the relations that define the Leavitt path algebra. Using the universal property of $L_{\K}(E)$ (see \cite[Remark 1.2.5]{AA}), we obtain the desired homomorphism from $L_{\K}(E)$ to $D^s(X)\star_{\beta}\mathbb{F}$. Following analogously to the proof of \cite[Proposition 3.2.]{GR}, we conclude that this homomorphism is an isomorphism and we can now state the following theorem.

 \begin{thm}
     Let $E$ be a graph without isolated vertices. Then, $L_{\K}(E)$ and $D^s(X)\star_{\beta}\mathbb{F}$ are isomorphic.
 \end{thm}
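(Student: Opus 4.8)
The plan is to construct the isomorphism via the universal property of $L_\K(E)$, following the scheme of \cite[Proposition 3.2]{GR} but replacing each function $1_v$ by the substitute prescribed above. For each sink $v$, fix an edge $f_v$ with $r(f_v)=v$, and define $\Phi(v)=1_{f_v\m}$ if $v$ is a sink, $\Phi(v)=\sum_{i=1}^{n_v}1_{(f_i)_v}$ if $v$ is regular, and $\Phi(v)=\sum_{i=1}^{\infty}1_{(f_i)_v}$ if $v$ is an infinite emitter, where $\{(f_i)_v\}=\{f\in E^1\mid s(f)=v\}$. I would then propose the assignment $f\mapsto 1_f\delta_f$, $f^*\mapsto 1_{f\m}\delta_{f\m}$ and $v\mapsto \Phi(v)\delta_e$ on the generators of $L_\K(E)$.

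Next I would verify the four relations of Definition~\ref{lpa} for these elements, using the product rule \eqref{prodc} together with the identities $\beta_f(1_{f\m})=1_f$ and $\beta_{f\m}(1_f)=1_{f\m}$. Relations (1) and (2) reduce to the facts that $\Phi(s(f))\,1_f=1_f$ (since $1_f$ is one of the summands of $\Phi(s(f))$, as $s(f)$ emits $f$) and that $\Phi(r(f))=1_{f\m}$, which holds by Remark~\ref{x's} together with the disjoint decomposition $X_{r(f)}=\bigsqcup_i X_{(f_i)_{r(f)}}$. For relation (3), a direct computation gives $(1_{f\m}\delta_{f\m})(1_{f'}\delta_{f'})=0$ when $f\neq f'$, by the orthogonality $D^s_f\cap D^s_{f'}=\{0\}$, and $(1_{f\m}\delta_{f\m})(1_f\delta_f)=1_{f\m}\delta_e=\Phi(r(f))\delta_e$ when $f=f'$. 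For relation (4), a regular vertex $v$ yields $\sum_{s(f)=v}(1_f\delta_f)(1_{f\m}\delta_{f\m})=\sum_{s(f)=v}1_f\delta_e=\Phi(v)\delta_e$, which is exactly the image of $v$. By the universal property of $L_\K(E)$ (\cite[Remark 1.2.5]{AA}), this produces a $\K$-algebra homomorphism $\psi\colon L_\K(E)\to D^s(X)\star_\beta\F$.

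It remains to show that $\psi$ is bijective. For surjectivity, I would note that products of the images $1_f\delta_f$ and $1_{f\m}\delta_{f\m}$ recover every homogeneous generator $1_p\delta_t$ with $t\neq e$, just as in \cite{GR}, while the infinite formal series spanning the degree-$e$ part of $D^s(X)$ are precisely the images $\Phi(v)\delta_e$ of the infinite-emitter vertices; hence $\psi$ is onto. For injectivity, I would observe that $\psi$ is $\F$-graded, carrying $f$, $f^*$ and $v$ into the components $D^s_f\delta_f$, $D^s_{f\m}\delta_{f\m}$ and $D^s_e\delta_e$; coarsening the grading along the length homomorphism $\F\to\Z$ makes $\psi$ a $\Z$-graded homomorphism between the usual $\Z$-gradings of the two algebras. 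Since $\Phi(v)\delta_e\neq 0$ for every vertex $v$, the graded uniqueness theorem for Leavitt path algebras forces $\psi$ to be injective, so $\psi$ is an isomorphism.

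The step I expect to demand the most care is the rigorous treatment of the formal series. One must confirm that each $\Phi(v)$ attached to an infinite emitter is a bona fide element of $D^s(X)$ and that the products in \eqref{prodc} involving it—occurring in relations (3) and (4) when $r(f)$ or $v$ is an infinite emitter—are computed correctly under the formal-series conventions, using repeatedly that $1_p1_r$ is either $0$ or one of $1_p,1_r$. This is the genuinely new point relative to \cite{GR}, where the $1_v$ were honest functions; the linear extension of $\beta_f$ to infinite formal sums and the orthogonality $D^s_a\cap D^s_b=\{0\}$ established above are exactly what make these manipulations legitimate.
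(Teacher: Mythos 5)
Your proposal matches the paper's proof essentially step for step: the same substitutes $\Phi(v)$ for the vertex functions (a chosen $1_{f_v^{-1}}$ at a sink, the finite sum $\sum_{i=1}^{n_v}1_{(f_i)_v}$ at a regular vertex, the formal series at an infinite emitter), verification of the defining relations of $L_{\K}(E)$, the universal property to obtain the homomorphism, and bijectivity following \cite[Proposition 3.2]{GR} --- whose injectivity argument is precisely the graded-uniqueness step you spell out via the homomorphism $\F\to\Z$ sending each edge to $1$. The paper likewise identifies the formal-series manipulations (legitimized by the linear extension of $\beta_f$ and the orthogonality of the ideals $D^s_a$) as the only genuinely new point relative to \cite{GR}, so your assessment of where the care is needed coincides with the paper's.
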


\end{document}